\documentclass[oneside,reqno,12pt]{amsart}

\usepackage{amsmath,amssymb,amsfonts}
\usepackage{kpfonts}
\usepackage{mathtools}
\usepackage{bbm}
\usepackage{hyperref}
\usepackage[foot]{amsaddr}

\usepackage{bbm}

\newtheorem{thm}{Theorem}
\newtheorem{lemma}{Lemma}

\theoremstyle{definition}
\newtheorem{defin}{Definition}

\theoremstyle{remark}

\theoremstyle{remark}
\newtheorem{remark}{Remark}

\newtheorem{assx}{Assumption A\hspace*{-4pt}}

\newcommand{\ii}{\ensuremath{\mathbf{1}}}
\newcommand{\rr}{\ensuremath{\mathbb{R}}}
\newcommand{\zz}{\ensuremath{\mathbb{Z}}}

\newcommand{\xx}{{\sf{X}}}
\newcommand{\pp}{{\sf{P}}}

\newcommand{\llf}{{\sf{L}}}
\newcommand{\bb}{\mathcal{B}}
\newcommand{\ff}{\mathcal{F}}

\newcommand{\aaa}{{\sf A}}
\newcommand{\pt}{\partial}

\begin{document}

\title[Asymptotic properties of the heat equation]{Asymptotic properties of the heat equation driven by stochastic measure}

\author{Vadym Radchenko}

\address{Department of Mathematical Analysis, Taras Shevchenko National University of Kyiv, 01601 Kyiv, Ukraine. Email: vadymradchenko@knu.ua}

\begin{abstract}
For the heat equations driven by a stochastic measure on $[-L, L]$ with the Dirichlet boundary condition, we prove that solutions tend to the solution of the heat equations defined on $\rr$ as $L\to \infty$. The estimate of the convergence rate is obtained. For a stochastic measure, we assume the $\sigma$-additivity in probability only.
\end{abstract}

\keywords{Stochastic measure, stochastic heat equation, Dirichlet boundary conditions, asymptotic properties of the solution}
\subjclass[2020]{Primary: 60H15; Secondary: 35R60}

\maketitle

\section{Introduction}\label{scintr}
In this paper, we consider the stochastic heat equation with the Dirichlet boundary condition, which can formally be written as
\begin{equation}\label{eq:burggl}
\begin{split}
\frac{\pt u^L}{\pt t}=\frac{\pt^2 u^L}{\pt x^2}+f(t,x,u^L(t,x))+\sigma(t, x)\frac{\pt  \mu}{\pt x},\\
u^L(t,-L)=u^L(t,L)=0,\quad u^L(0,x)=u_0(x),
\end{split}
\end{equation}
where $(t, x)\in [0, T]\times[-L,L]$, and $\mu$ is a stochastic measure defined on $\bb(\rr)$ (Borel
$\sigma$-algebra in $\rr$). For $\mu$ we assume only $\sigma-$additivity in probability, assumptions for $f,\ \sigma$
and $u_0$ are given in Section~\ref{sc:prob}. We study the solution to the formal equation~\eqref{eq:burggl} in the mild
form (see~\eqref{eq:bemfgl} below).

We study the convergence as $L\to \infty$, and prove that $u^L$ tends to the solution to the equation
\begin{equation}\label{eq:burgg}
\frac{\pt u}{\pt t}=\frac{\pt^2 u}{\pt x^2}+f(t,x,u(t,x))+ \sigma(t, x)\frac{\pt  \mu}{\pt x},\quad u(0,x)=u_0(x),
\end{equation}
where $(t, x)\in [0, T]\times\rr$. The solution to equation~\eqref{eq:burgg} is also considered in the mild form (see~\eqref{eq:bemfg} below).

For an equation driven by space-time white noise, this problem was studied in~\cite{candil26} where the Neumann and mixed boundary conditions were also considered.

Theory of equations driven by stochastic measures is developed in~\cite{radbook}. Existence and uniqueness of mild solutions to~\eqref{eq:burggl} and ~\eqref{eq:burgg}, their boundedness and H\"{o}lder regularity, are proved in~\cite{radt26} and~\cite{radt24} respectively. Also, generalization of equation~\eqref{eq:burgg} is considered in~\cite{radm23}, where the $\llf_2$-continuity of the solution was proved.

Equations driven by the space-time white noise are considered in detail in~\cite{dalbook}.

The rest of the paper is organized as follows. Section~\ref{sc:prel} contains the basic information concerning the integrals with respect to stochastic measures. In Section~\ref{sc:prob}, we formulate the problem and our main assumptions. In Section~\ref{sc:auxl} we obtain an estimate for the difference of the stochastic integrals from our equations. Section~\ref{sc:mainr} contains the main result of the paper.

\section{Preliminaries}\label{sc:prel}

Let $\llf_0=\llf_0(\Omega, {\ff}, {\pp})$ be the set of all real-valued
random variables defined on the complete probability space $(\Omega, {\ff}, {\pp} )$. Convergence in $\llf_0$ means the convergence in probability. Let ${\xx}$ be an arbitrary set and ${\bb}$ a $\sigma$-algebra of subsets of ${\xx}$.

\begin{defin}\label{def:stme}
A $\sigma$-additive mapping $\mu:\ {\bb}\to \llf_0$ is called {\em stochastic measure} (SM).
\end{defin}

We do not assume the moment existence or martingale properties for SM. We can say that $\mu$ is an $\llf_0$--valued measure.

Important examples of SMs are orthogonal stochastic measures, $\alpha$-stable random measures defined on a $\sigma$-algebra for $\alpha\in (0,1)\cup(1,2]$ (see \cite[Chapter 3]{samtaq}). Many examples of the SMs on $\bb(\rr)$ may be given by the Wiener-type integral
\begin{equation}\label{eq:muax}
\mu(A)=\int_{\rr} {\ii}_A(x)f(x)\,{d}Y_x,
\end{equation}
where $f$ is a deterministic function, integrable on $\rr$. For instance, we can take in~\eqref{eq:muax} any square integrable martingale $Y_x$ or $Y_x=W_x^H$~-- the fractional Brownian motion with Hurst index $H>1/2$.

Other examples of SMs can be found in Section 1.2.1~\cite{radbook}.

For deterministic measurable functions $f:\xx\to\rr$, an integral of the form $\int_{\xx}f\,d\mu$ is defined and studied
in~\cite[Chapter 7]{kwawoy}, ~\cite[Chapter 1]{radbook}. In particular, every bounded measurable $f$ is integrable with respect to any~$\mu$.

Below, we will use the following statement.

\begin{lemma} \label{lmfkmu} (Lemma 3.1~\cite{radt09}, Lemma 2.3~\cite{radbook}) Let $\mu$ be an SM on $(\xx,\bb)$, $f_k:\ {\xx}\to \rr,\ k\ge 1$, be measurable functions such that
$\hat{f}(x)=\sum_{k=1}^{\infty} |{f_k}(x)|$ is integrable with respect to~$\mu$. Then
\begin{equation*}
\sum_{k=1}^{\infty}\Bigl(\int_{\xx} f_k\,d\mu \Bigr)^2<\infty\quad
\mbox{\textrm ~a.~s.}
\end{equation*}
\end{lemma}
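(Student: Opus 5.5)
The idea is to reduce the statement to a property of random series and then use the Kwapień--Woyczyński theory of $\llf_0$-valued vector measures, on which the integral $\int f\,d\mu$ is built.

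\emph{Step 1: reduction to $\pm1$ weights.} Let $\varepsilon_k$, $k\ge1$, be i.i.d. Rademacher signs defined on an auxiliary probability space $(\Omega',\ff',\pp')$, independent of $\Omega$. For any deterministic finite sequence of signs $\theta_k\in\{-1,1\}$, $1\le k\le n$, the function $g_\theta=\sum_{k\le n}\theta_k f_k$ satisfies $|g_\theta|\le \hat f$. We will use the standard fact, \cite[Chapter 7]{kwawoy}, \cite[Chapter 1]{radbook}, that the set
\[
\Bigl\{\int_{\xx} g\,d\mu:\ g \mbox{ measurable},\ |g|\le \hat f\Bigr\}
\]
is bounded in $\llf_0$, i.e.\ bounded in probability. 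This follows because $\hat f$ is $\mu$-integrable, so the vector measure $A\mapsto\int_A \hat f\,d\mu$ has bounded semivariation in $\llf_0$. Hence, for every $\delta>0$ there is $C$ such that
\[
\pp\Bigl(\Bigl|\sum_{k\le n}\theta_k \int f_k\,d\mu\Bigr|>C\Bigr)<\delta
\]
uniformly in $n$ and in $\theta$.

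\emph{Step 2: randomize the signs.} Put $\xi_k=\int f_k\,d\mu$. Averaging the bound of Step 1 over $\theta$ with respect to $\pp'$ gives
\[
(\pp\times\pp')\Bigl(\Bigl|\sum_{k\le n}\varepsilon_k\xi_k\Bigr|>C\Bigr)<\delta
\]
uniformly in $n$. Next apply the Paley--Zygmund type inequality for Rademacher sums (a consequence of Khinchine's inequality): for fixed $\omega$,
\[
\pp'\Bigl(\Bigl|\sum_{k\le n}\varepsilon_k\xi_k\Bigr|\ge \tfrac12\Bigl(\sum_{k\le n}\xi_k^2\Bigr)^{1/2}\Bigr)\ge c>0
\]
with an absolute constant $c$. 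Combining the two estimates via Fubini yields
\[
c\,\pp\Bigl(\Bigl(\sum_{k\le n}\xi_k^2\Bigr)^{1/2}>2C\Bigr)\le \delta .
\]
Letting $n\to\infty$, monotone convergence gives $\pp\bigl(\sum_k\xi_k^2>4C^2\bigr)\le\delta/c$. Since $\delta$ is arbitrary, $\sum_k\xi_k^2<\infty$ a.s.

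\emph{Main obstacle.} The key step is the uniform boundedness in probability in Step 1. This is a statement about the integral with respect to an $\llf_0$-valued measure, namely domination $|g|\le\hat f$ implying tightness of $\int g\,d\mu$. If it is not directly available, I would derive it as follows. Approximate $\hat f$ and the $g$'s by simple functions. Then use that $\mu$, being $\sigma$-additive into $\llf_0$, has a bounded range, and that a bounded range is preserved under the convex hull of $\{\pm\ii_A\}$ in $\llf_0$ up to constants. This is the Kwapień--Woyczyński result that the range of an $\llf_0$-valued measure is bounded together with its ``absolutely convex'' hull.
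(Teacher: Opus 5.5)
Your proof is correct. Step 2 works as written: the Paley--Zygmund bound holds with $c=3/16$, because the fourth moment of $\sum_k\varepsilon_k a_k$ is at most $3(\sum_k a_k^2)^2$, and the Fubini and monotone-convergence steps are fine.

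The paper does not prove this lemma; it only quotes it from its sources. So the comparison is with the standard argument, which uses the same randomization but a weaker input. There you fix an infinite sign sequence $\theta\in\{-1,1\}^{\mathbb N}$. The partial sums $\sum_{k\le n}\theta_k f_k$ converge pointwise and are dominated by $\hat f$, so the dominated convergence theorem makes $\sum_k\theta_k\xi_k$ converge in probability; that is, $\sum_k\xi_k$ is unconditionally convergent in $\llf_0$. Fubini then shows that, for a.e.\ $\omega$, the Rademacher series $\sum_k\varepsilon_k\xi_k(\omega)$ is tight in $\pp'$ along a subsequence of partial sums. This forces $\sum_k\xi_k(\omega)^2<\infty$.

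This input already gives the uniformity your Step 2 needs. Put $\phi_\theta(C)=\sup_n\pp(|\sum_{k\le n}\theta_k\xi_k|>C)$. Then $\phi_\theta(C)\to 0$ for every $\theta$, so by bounded convergence $\sup_n(\pp\times\pp')(|\sum_{k\le n}\varepsilon_k\xi_k|>C)\le\int\phi_\theta(C)\,d\pp'(\theta)\to 0$, and the rest of your Step 2 runs unchanged. What your stronger Step 1 buys is a quantitative conclusion that is uniform over all sequences $(f_k)$ with $\sum_k|f_k|\le\hat f$: namely $\pp(\sum_k\xi_k^2>4C^2)\le\delta/c$, with $C$ depending only on $\delta$ and $\hat f$.

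On the justification of Step 1: the fallback you sketch under \emph{Main obstacle} does not work as a general principle. In the non-locally-convex space $\llf_0$ the convex hull of a bounded set need not be bounded, and approximation by simple functions cannot produce this. For the sets at hand, however, the claim follows quickly from facts the paper already uses.

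\begin{itemize}
\item Consider the linear map $h\mapsto\int h\hat f\,d\mu$ from bounded measurable functions with the sup norm into $\llf_0$.
\item It has closed graph by dominated convergence: if $h_n\to h$ uniformly, then $h_n\hat f\to h\hat f$ pointwise with dominant $(\sup_n\|h_n\|_\infty)\hat f$.
\item By the closed graph theorem for F-spaces the map is continuous, so it sends the unit ball to a bounded subset of $\llf_0$.
\item Finally, $\{g:|g|\le\hat f\}=\{h\hat f:|h|\le 1\}$, taking $h=g/\hat f$ on $\{\hat f>0\}$ and $h=0$ elsewhere.
\end{itemize}
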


For any $j\in \zz$ and all $n\ge 0$, put
\[
d_{kn}^{(j)}=j+k 2^{-n},\quad 0\le k\le 2^n,\quad \Delta_{kn}^{(j)}=(d_{(k-1)n}^{(j)}, d_{kn}^{(j)}],\quad
1\le k\le 2^n\, .
\]

The following lemma is a key tool for estimating a version of the stochastic integral. In the sequel, we assume that SM $\mu$ is defined on Borel subsets of $\rr$.

\begin{lemma}\label{lmessih} (Lemma 2.8~\cite{radbook})
Let $Z$ be an arbitrary set, and the function $q(z,x):
Z\times [j, j+1]\to\rr$ be such that all paths $q(z,\cdot)$ are continuous on $[j,j+1]$, $\aaa\subset(j,j+1]$ be a Borel set. Denote
\[
q_n(z,x)=\sum_{1\le k\le 2^n}q(z,{d}_{(k-1)n}^{(j)}){\bf 1}_{\Delta_{kn}^{(j)}}(x).
\]
Then the random function
\[
\eta(z)=\int_{\aaa}q(z, x)\,d\mu(x),\ z\in Z,
\]
has a version
\begin{equation}\begin{split}
\widetilde{\eta}(z)  = \int_{\aaa}q_0(z,x)\,d\mu(x)
+\sum_{n\ge 1}\Bigl(\int_{\aaa}q_n(z,x)\,d\mu(x)-\int_{\aaa}q_{n-1}(z,x)\,d\mu(x)\Bigr)\label{eqvers}
\end{split}
\end{equation}
such that for all $\beta>0$, $\omega\in\Omega$, $z\in Z$
\begin{equation}\label{eqesqm} \begin{split}
|\widetilde{\eta}(z)| \le  |q(z,j)\mu(\aaa)|\\
  + \Bigl\{\sum_{n\ge 1}2^{n\beta}
\sum_{1\le k\le 2^{n}}|q(z,d_{kn}^{(j)})-q(z,d_{(k-1)n}^{(j)})|^2\Bigr\}^{1/2}\\
\times\Bigl\{\sum_{n\ge 1}2^{-n\beta}\sum_{1\le k\le 2^{n}}|\mu(\Delta_{kn}^{(j)}\cap\aaa)|^2\Bigr\}^{1/2}.
\end{split}
\end{equation}
\end{lemma}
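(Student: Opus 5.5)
The plan is the standard dyadic chaining argument. Since every $q_n(z,\cdot)$ is a step function taking finitely many values, each $\int_{\aaa}q_n(z,x)\,d\mu(x)$ is a finite sum $\sum_k q(z,d^{(j)}_{(k-1)n})\,\mu(\Delta^{(j)}_{kn}\cap\aaa)$. It is therefore defined for every $\omega$ once a version of each $\mu(\Delta^{(j)}_{kn}\cap\aaa)$ is fixed, and this happens simultaneously for all $z$.

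First, by continuity of $q(z,\cdot)$ on the compact interval $[j,j+1]$, we have $q_n(z,\cdot)\to q(z,\cdot)$ uniformly, with $|q_n|$ bounded by $\sup|q(z,\cdot)|$. By the dominated convergence theorem for integrals with respect to an SM (bounded measurable integrands, see \cite[Chapter 1]{radbook}), $\int_{\aaa}q_n\,d\mu\to\int_{\aaa}q\,d\mu$ in probability for each fixed $z$. The telescoping series in \eqref{eqvers} has partial sums equal to $\int_{\aaa}q_N(z,x)\,d\mu(x)$. So wherever the series converges a.s., its sum coincides a.s. with $\eta(z)$, and $\widetilde\eta$ is a version of $\eta$.

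The main step is the pathwise bound \eqref{eqesqm}, which also yields convergence. We have $q_0(z,x)=q(z,j)\ii_{(j,j+1]}(x)$, so the first term equals $q(z,j)\mu(\aaa)$. For the differences, note that $\Delta^{(j)}_{k(n-1)}$ splits into $\Delta^{(j)}_{(2k-1)n}\cup\Delta^{(j)}_{(2k)n}$. On the left half, $q_n$ and $q_{n-1}$ agree. On the right half, they differ by $q(z,d^{(j)}_{(2k-1)n})-q(z,d^{(j)}_{(2k-2)n})$. Hence
\[
\int_{\aaa}(q_n-q_{n-1})\,d\mu=\sum_{k}\bigl(q(z,d^{(j)}_{(2k-1)n})-q(z,d^{(j)}_{(2k-2)n})\bigr)\mu(\Delta^{(j)}_{(2k)n}\cap\aaa).
\]
Its absolute value is at most $\sum_{1\le k\le 2^n}|q(z,d^{(j)}_{kn})-q(z,d^{(j)}_{(k-1)n})|\,|\mu(\Delta^{(j)}_{kn}\cap\aaa)|$. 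We then insert the factors $2^{n\beta/2}\cdot2^{-n\beta/2}$ and apply the Cauchy--Schwarz inequality jointly over $n\ge1$ and $k$. This gives the product of the two square roots in \eqref{eqesqm}, bounding the absolute series of differences and hence $|\widetilde\eta(z)|$.

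The delicate point is making $\widetilde\eta$ well defined for every $\omega$ and $z$. We would define $\widetilde\eta(z)$ as the sum of the series wherever the right-hand side of \eqref{eqesqm} is finite, and as $0$ otherwise. The bound then holds trivially on the bad set and by absolute convergence on the good set. To see that the good set has full probability when it matters, we would use Lemma~\ref{lmfkmu}. Take $f_{kn}=2^{-n\beta/2}\ii_{\Delta^{(j)}_{kn}\cap\aaa}$; then $\sum_{n,k}|f_{kn}|\le\sum_n2^{-n\beta/2}<\infty$ is bounded and hence integrable. So the second factor is finite a.s., uniformly in $z$. If the first factor is infinite for some $z$, \eqref{eqesqm} is vacuous for that $z$ (or, with the convention $\infty\cdot0$, it is handled by the zero definition). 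For the version property at such $z$, convergence in probability of the partial sums together with the a.s. convergence established elsewhere suffices. The only genuine care needed is this bookkeeping about where the series converges.
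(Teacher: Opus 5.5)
Your core estimate is the standard dyadic chaining argument behind the cited lemma. There is one slip in it, and a genuine gap in how you build the version.

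The slip: the bound $|\int_{\aaa}(q_n-q_{n-1})\,d\mu|\le\sum_{1\le k\le 2^n}|q(z,d_{kn}^{(j)})-q(z,d_{(k-1)n}^{(j)})|\,|\mu(\Delta_{kn}^{(j)}\cap\aaa)|$ is false as written. In your own exact formula, the increment with index $2k-1$ multiplies $\mu(\Delta_{(2k)n}^{(j)}\cap\aaa)$. For a counterexample with $n=1$, take $q(z,d_{21}^{(j)})=q(z,d_{11}^{(j)})$ and $\mu(\Delta_{11}^{(j)}\cap\aaa)=0$. The repair is to apply Cauchy--Schwarz to the exact sum over $1\le k\le 2^{n-1}$, and only then enlarge both factors to sums over $1\le k\le 2^n$. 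The final estimate is unchanged.

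The gap: setting $\widetilde\eta(z)=0$ wherever the right-hand side of \eqref{eqesqm} is infinite destroys the version property. This happens for every $z$ whose $q$-factor diverges for all $\beta>0$, and mere continuity allows that. Here is an example. Let $Z$ be a singleton, let $\mu$ be Lebesgue measure restricted to $(j,j+1]$ (a deterministic SM), let $\aaa=(j,j+1]$, and let $q(z,x)=\sum_{m\ge1}2^{-m/2}\,\mathrm{dist}(2^mx,\mathbb{Z})$. One computes $\sum_{1\le k\le 2^n}|q(z,d_{kn}^{(j)})-q(z,d_{(k-1)n}^{(j)})|^2=1-2^{1-n}$. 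So the right-hand side is $+\infty$ for every $\beta$ and every $\omega$, and your definition gives $\widetilde\eta=0$. Yet $\eta=\int_j^{j+1}q(z,x)\,dx>0$, and the series even converges everywhere, since its partial sums are Riemann sums. Your closing appeal to ``the a.s. convergence established elsewhere'' refers to nothing. Lemma~\ref{lmfkmu} controls only the $\mu$-factor, and convergence in probability of the partial sums does not give a.s. convergence of the series. In addition, your definition depends on $\beta$, while the bound is claimed for all $\beta$ at once.

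This does not affect the uses in the paper: there the paths are H\"older of order $\beta(\sigma)>1/2$, so the $q$-factor is finite for small $\beta$. But the lemma is stated for merely continuous paths.

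The fix is easy and needs no a.s. convergence. Define $\widetilde\eta(z)$ as the sum of the series wherever it converges, and as a fixed version of $\eta(z)$ elsewhere.
\begin{itemize}
\item Version property: since $\int_{\aaa}q_N(z,x)\,d\mu(x)\to\eta(z)$ in probability, a subsequence converges a.s. Hence on the event where the series converges, its sum equals $\eta(z)$ a.s., and so $\widetilde\eta(z)=\eta(z)$ a.s.
\item The bound: if the right-hand side of \eqref{eqesqm} is finite, your Cauchy--Schwarz bound shows the series converges absolutely. In the degenerate products $0\cdot\infty$ all increments vanish. So $\widetilde\eta(z)$ is the series sum and the estimate holds. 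If the right-hand side is infinite, there is nothing to prove.
\end{itemize}
With this definition, Lemma~\ref{lmfkmu} is not needed for the lemma itself. It is needed only in applications, to know that the $\mu$-factor is a.s. finite.
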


In the sequel, by $C$ and $C(\omega)$ we denote the constants and random constants, respectively, whose exact values are not important. Notations of the kind $C_\lambda$, $C_{\delta}$ means that these constants may depend of $\lambda$, $\delta$ respectively.

\section{The problem}
\label{sc:prob}

We consider~\eqref{eq:burggl} in the mild form
\begin{equation}\label{eq:bemfgl}
\begin{split}
u^L(t,x)=\int_{[-L,L]} G^L(t,x,y)u_0(y)\,dy\\
+\int_0^t \int_{[-L,L]}G^L(t-s,x,y) f(s,y,u^L(s,y))\,dy\,ds\\
+\int_{[-L,L]} \int_0^t G^L(t-s,x,y) \sigma(s,y)\,ds\,d\mu(y),
\end{split}
\end{equation}
where equality holds a.~s. for each $(t,x)\in [0,T]\times [-L,L]$. Here ${G}^L(t, x, y)$ is the fundamental solution of the heat equation with the Dirichlet boundary condition
\begin{equation*}
\dfrac{\pt u}{\pt t}=\dfrac{\pt^2 u}{\pt x^2},\quad (t,x)\in [0,T]\times [-L,L],\quad u(t,-L)=u(t,L)=0.
\end{equation*}
It is known that
\begin{equation*}
G^L(t, x, y)=\sum_{n=-\infty}^{\infty}(p(t,x-y+4nL)-p(t,x+y+(4n+2)L)),
\end{equation*}
where
\begin{equation*}
p(t,x)=\dfrac{1}{\sqrt{4\pi t}}\exp\Bigl\{-\dfrac{x^2}{4t}\Bigr\}
\end{equation*}
is a standard heat kernel (see, for example, \cite[Section 8.4.2]{pinsky}, \cite[Lemma 1.4.1]{dalbook}).

We will refer to the following assumptions on elements of~\eqref{eq:bemfgl}.

\begin{assx}\label{assxu} $u_0(y)=u_0(y, \omega):{\rr}\times\Omega\to\rr$ is measurable and $|u_0(y)|\le C_{u_0}(\omega)$
for some random constant $C_{u_0}(\omega)$.
\end{assx}

\begin{assx}\label{assxfgb}  $f(s, y, v):[0,T]\times{\rr}\times\rr\to\rr$ is measurable, and bounded:
\[
|f(s, y, v)|\le C_{f}
\]
for some constant $C_{f}$.
\end{assx}

\begin{assx}\label{assxfgyv}  $f(s, y, v)$ is globally Lipschitz in $v,y$:
\[
|f(s, y_1, v_1)-f(s, y_2, v_2)|\le L_{f} (|v_1-v_2|+ |y_1-y_2|),\quad y_i,\ v_i \in\rr,
\]
for some constant $L_{f}$.
\end{assx}

\begin{assx}\label{assxs}
$\sigma(s,y):[0, T]\times{\rr}\to\rr$ is measurable, and
\[
|\sigma(s,y)|\le C_{\sigma},\quad |\sigma(s,y_1)-\sigma(s,y_2)|\le L_{\sigma}|y_1-y_2|^{\beta(\sigma)}
\]
for some $1/2<\beta(\sigma)<1$ and constants $C_{\sigma},\ L_{\sigma}$.
\end{assx}

\begin{assx}\label{assxm} $|y|^{\tau}$ is integrable w.~r.~t. $\mu$ on~$\rr$ for some $\tau>1/2$.
\end{assx}

Assumptions A\ref{assxu} -- A\ref{assxs} and Theorem~5.1~1)~\cite{radt26} imply that \eqref{eq:bemfgl} has a unique bounded solution.

Also, we consider~\eqref{eq:burgg} in the mild form
\begin{equation}\label{eq:bemfg}
\begin{split}
u(t,x)=\int_{\rr} p(t,x-y)u_0(y)\,dy
+\int_0^t \int_{\rr}p(t-s,x-y) f(s,y,u(s,y))\,dy\,ds\\
+\int_{\rr} \int_0^t p(t-s,x-y) \sigma(s,y)\,ds\,d\mu(y).
\end{split}
\end{equation}

The existence and uniqueness of the bounded solution to~\eqref{eq:bemfg} follows from Theorem~5.1~1)~\cite{radt24}.

This paper aims to prove that
\[
u^L(t,x)\to u(t,x),\quad L\to\infty,
\]
for each $(t,x)\in [0,T]\times \rr$. Mainly, we will consider $L=i\delta$ for some fixed $\delta>0$, $i\ge 1$, and obtain the estimate of $|u^L(t,x)-u(t,x)|$ for some versions of the solutions.

In the estimates below, we will use that
\begin{equation}\label{eq:estpx}
\Bigl|\frac{\pt p}{\pt y} (t-s,x-y)\Bigr|\le  \frac{C_\lambda}{t-s}
 e^{-\frac{\lambda(x-y)^2}{t-s}}
\end{equation}
for any $\lambda\in (0,1/4)$ and some constant $C_\lambda$.

\section{Auxiliary lemma}\label{sc:auxl}

In the sequel, we will use the notations $D_L=[-L,L]$, $D_L^c=\rr\setminus [-L,L]$, $\zz^*=\zz\setminus\{0\}$.

Consider the stochastic terms
\begin{equation*}
\begin{split}
\vartheta^L(t,x)=\int_{D_L} \int_0^t G^L(t-s,x,y) \sigma(s,y)\,ds\,d\mu(y),\\
\vartheta(t,x)=\int_{\rr} \int_0^t p(t-s,x-y) \sigma(s,y)\,ds\,d\mu(y).
\end{split}
\end{equation*}

In the following statement, we fix any $\delta>0$, and consider $L=i\delta$, $i\in\zz_+$.

\begin{lemma}\label{lm:zetaes}
Let Assumptions A\ref{assxs} and A\ref{assxm} hold. Then, for each $\delta>0$ there exist versions of $\vartheta^L, \vartheta$ such that for all $\omega\in\Omega$ for each $x\in D_L$, $L=i\delta$, $i\in\zz_+$, $0<t\le T$, for any $0<\lambda<\frac{1}{4}$ holds
\begin{equation}\label{eq:zetaes}
|\vartheta^L(t,x)-\vartheta(t,x)|\le  C_{\delta}(\omega) C_\lambda  e^{-\frac{\lambda (L-|x|)^2}{t}} .
\end{equation}
\end{lemma}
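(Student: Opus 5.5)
The plan is to split the difference into two stochastic integrals and estimate each pathwise by Lemma~\ref{lmessih}, applied separately on each unit interval $(j,j+1]$. Write
\[
\vartheta^L(t,x)-\vartheta(t,x)=\int_{D_L}\int_0^t \bigl(G^L(t-s,x,y)-p(t-s,x-y)\bigr)\sigma(s,y)\,ds\,d\mu(y)-\int_{D_L^c}\int_0^t p(t-s,x-y)\sigma(s,y)\,ds\,d\mu(y).
\]
Set $q^L(t,x,y)=\int_0^t (G^L-p)(t-s,x,y)\sigma(s,y)\,ds$ for $y\in D_L$, and $r(t,x,y)=\int_0^t p(t-s,x-y)\sigma(s,y)\,ds$ for $y\in D_L^c$. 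Then
\[
G^L-p=\sum_{n\in\zz^*}p(t,x-y+4nL)-\sum_{n\in\zz}p(t,x+y+(4n+2)L).
\]
For $x,y\in D_L$, every image point satisfies $|x-y+4nL|\ge 2L\ge 2(L-|x|)$ when $n\ne 0$, and $|x+y+(4n+2)L|\ge (L-|x|)+(L-|y|)\ge L-|x|$. Hence each image term is bounded by a Gaussian in which $(L-|x|)^2$ splits off, and the series is summable.

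Next we bound the two kernels and their $y$-derivatives. Using $e^{-a(b+c)^2}\le e^{-ab^2}e^{-ac^2}$ for $b,c\ge0$, and \eqref{eq:estpx} with a slightly larger $\lambda'\in(\lambda,1/4)$, the target estimate on $D_L$ is
\[
|q^L(t,x,y)|+|\partial_y q^L(t,x,y)|\le C_\lambda e^{-\lambda(L-|x|)^2/t}\,h(y),
\]
with $h$ decaying in $y$. On $D_L^c$, where $|x-y|\ge |y|-|x|\ge L-|x|$, the target is
\[
|r|+|\partial_y r|\le C_\lambda e^{-\lambda(L-|x|)^2/t}e^{-c(|y|-L)^2/t}.
\]
The derivative bounds give Lipschitz increments in $y$ of size $C2^{-n}$ times these factors. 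The point to handle is that $\partial_y$ falls on the kernel only through $\sigma$-free factors: $\sigma$ is merely H\"older in $y$, so I would estimate increments $q(y_1)-q(y_2)$ directly. For the kernel part I would use the mean value theorem, giving the factor $(t-s)^{-1}e^{-\lambda'(\cdot)^2/(t-s)}$, which integrates in $s$ to something bounded times $e^{-\lambda(L-|x|)^2/t}$ since the distance is at least $L-|x|$. For the $\sigma$-increment part, the factor $|y_1-y_2|^{\beta(\sigma)}$ with $\beta(\sigma)>1/2$ suffices: the first sum in \eqref{eqesqm} converges for $\beta<2\beta(\sigma)-1$. This is how the $\beta$ in Lemma~\ref{lmessih} is chosen.

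Applying Lemma~\ref{lmessih} on each $(j,j+1]$ with $\aaa=(j,j+1]\cap D_L$ or $(j,j+1]\cap D_L^c$, and $z=(t,x)$ (for $\vartheta^L$, also $z=(t,x,L)$, which is where $L=i\delta$ being countable matters), we get versions with
\[
|\widetilde\eta_j|\le C\,e^{-\lambda(L-|x|)^2/t}\,w_j\Bigl(|\mu(\aaa)|+\Bigl\{\sum_n 2^{-n\beta}\sum_k|\mu(\Delta^{(j)}_{kn}\cap\aaa)|^2\Bigr\}^{1/2}\Bigr),
\]
with weights $w_j$ decaying in $|j|$. Because the sets $\aaa$ depend on $L\in\delta\zz_+$, the random factors must be controlled uniformly in $L$. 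For the interval partition this is arranged by taking the dyadic grid adapted to $\delta$, i.e. rescaling so that $\pm L$ are grid points.

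The main obstacle is summing over $j$ with a random constant independent of $L$, $x$, and $t$. My plan is to use Assumption A\ref{assxm} together with Lemma~\ref{lmfkmu}, taking $f_k$ to be the indicators $|j|^{\tau}\ii_{\Delta_{kn}^{(j)}}$ weighted by $2^{-n\beta/2}$ (the sum $\hat f\le C(1+|y|^\tau)$ is integrable). This gives
\[
\sum_j |j|^{2\tau}\sum_n 2^{-n\beta}\sum_k \mu(\Delta_{kn}^{(j)})^2<\infty\quad\text{a.s.}
\]
The intersections with $D_L$ are full dyadic cells by the grid choice. A Cauchy--Schwarz in $j$ against $w_j|j|^{-\tau}$, which is square-summable since $\tau>1/2$, then yields $C_\delta(\omega)$. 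Finally, on the null set where the series diverge, I would redefine the versions to be $0$, so that \eqref{eq:zetaes} holds for all $\omega$.
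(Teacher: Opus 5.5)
Your plan follows the paper's proof. You split into the integrals over $D_L$ and $D_L^c$ and use the image expansion of $G^L-p$. You bound increments of the $y$-integrand by a $\sigma$-H\"older part plus a kernel part. You apply Lemma~\ref{lmessih} on a dyadic grid adapted to $\delta$, so that every cell lies wholly in $D_L$ or in $D_L^c$. Finally you use Lemma~\ref{lmfkmu} with weights from A\ref{assxm} and Cauchy--Schwarz with $\tau>1/2$. Two of your choices are cleaner than the paper's. The bound $|x+y+(4n+2)L|\ge (L-|x|)+(L-|y|)$ handles both near reflections $n=0,-1$; the paper wrongly claims the $n=-1$ image is at distance $>T$. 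Your Lemma~\ref{lmfkmu} weights $2^{-n\beta/2}$ also match \eqref{eqesqm} directly.

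Some minor points. Use $(|j|+1)^{\tau}$ instead of $|j|^{\tau}$, so that the Cauchy--Schwarz factor is finite at $j=0$. The weights $w_j$ do not decay in $|j|$, since the near reflections do not decay at $y=\mp L$, but boundedness of $w_j$ is all you need. Countability of $\{i\delta\}$ plays no role, because Lemma~\ref{lmessih} allows an arbitrary $Z$; what matters is that $\pm L$ are grid points.

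The step that fails as written is the kernel increment. You apply the mean value theorem for each fixed $s$ and then integrate $(t-s)^{-1}e^{-\lambda' d^2/(t-s)}$ in $s$. This gives
\[
\int_0^t \frac{1}{r}e^{-\lambda' d^2/r}\,dr=\int_{\lambda' d^2/t}^{\infty}\frac{e^{-z}}{z}\,dz ,
\]
which is at most $e^{-\lambda' d^2/t}$ only when $\lambda' d^2\ge t$, and grows like $\log(t/d^2)$ as $d\to0$. In $I_1$ the reflections $p(t-s,x+y\pm2L)$, and in $I_2$ the kernel $p(t-s,x-y)$ for $y$ just outside $D_L$, have only $L-|x|$ as a lower bound on the distance. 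That bound vanishes at $x=\pm L$, and the distance itself can vanish, e.g.\ $x=y=-L$ for the $n=0$ reflection. So for $x$ near $\pm L$ your H\"older constant is not uniform in $x$, and Lemma~\ref{lmessih} does not yield a single $C_\delta(\omega)$.

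When $\lambda(L-|x|)^2<t$, the right side of \eqref{eq:zetaes} is at least $e^{-1}C_\delta(\omega)C_\lambda$, so what you need there is an increment bound uniform in $x$. It comes from integrating in time before taking any spatial supremum:
\[
\int_0^t|p(r,a-y_1)-p(r,a-y_2)|\,dr\le\int_{y_1}^{y_2}\int_0^t|\partial_w p(r,a-v)|\,dr\,dv .
\]
Combine this with the exact identity, valid for all $w\neq0$,
\[
\int_0^t|\partial_w p(r,w)|\,dr=\frac{1}{2\sqrt{\pi}}\int_{w^2/(4t)}^{\infty}u^{-1/2}e^{-u}\,du\le Ce^{-w^2/(4t)} .
\]
Together these give $C|y_1-y_2|\,e^{-(L-|x|)^2/(4t)}$ for every $x\in D_L$. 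The paper uses this Fubini order for $I_{12}$, but then invokes \eqref{gaestbt}, which also requires $b/t\ge1$. Its ``similarly'' for the image $p(t-s,x+y+2L)$ therefore leaves the same regime open, and it is repaired the same way.
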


\begin{proof}
For $x\in D_L$, we will estimate
\begin{equation*}
\begin{split}
\vartheta^L(t,x)-\vartheta(t,x)=\int_{D_L} \int_0^t G^L(t-s,x,y) \sigma(s,y)\,ds\,d\mu(y)\\
-\int_{\rr} \int_0^t p(t-s,x-y) \sigma(s,y)\,ds\,d\mu(y)\\
=\int_{D_L}  \int_0^t (G^L(t-s,x,y) - p(t-s,x-y) )\sigma(s,y)\,ds\,d\mu(y)\\
-\int_{D_L^c} \int_0^t p(t-s,x-y) \sigma(s,y)\,ds\,d\mu(y)
:=I_1+I_2.
\end{split}
\end{equation*}
First, consider $I_1$. For $x,y\in D_L$,
\begin{equation}\label{eq:estgp}
\begin{split}
G^L(t-s,x,y) - p(t-s,x-y)=\sum_{n\in\zz^*}p(t-s,x-y+4nL)\\
-\sum_{n\in\zz^*}p(t-s,x+y+(4n+2)L)-p(t-s,x+y+2L).
\end{split}
\end{equation}

For $x, y\in D_L$ we have that $|x+y+2L|\ge L-|x|$.

We will take $i$ large enough such that $|x-y+4nL|, |x+y+(4n+2)L|>T\ge t-s,\ n\in\zz^*$ here. (By Lemma~4.2~\cite{radt24}, $\vartheta$ is bounded. By (4.9) of \cite{radt26}, each $\vartheta^{i\delta}$ is bounded. Thus, excluding the finite number of $i$ does not influence~\eqref{eq:zetaes}.)

For fixed $t, x$ set
\begin{equation}\label{eq:defq}
q(z,y)= \int_0^t p(t-s,x-y)\sigma (s,y)\,ds,\quad
z=(t,x),\ y\in D_L.
\end{equation}
Our considerations will imply that this function is continuous in~$y$; thus, the conditions of Lemma~\ref{lmessih} hold (for this purpose, we can use estimate~\eqref{est11} below).

Consider
\begin{eqnarray*}
    q(z,y+h)-q(z,y)
  = \int_0^t p (t-s,x-y)
(\sigma (s,y+h)-\sigma (s,y))\,ds\\
 + \int_0^t(p(t-s,x-y-h)-p(t-s,x-y)) \sigma (s,y+h)\,ds
:=I_{11}+I_{12}.
\end{eqnarray*}

We will use the following simple estimate for $b/t\ge 1$.
\begin{eqnarray}
\int_0^t\frac{1}{r} e^{-\frac{b}{r}} dr\stackrel{{b}/{r} =z}{=}\int_{b/t}^{\infty}\frac{1}{z} e^{-z} dz\le \int_{b/t}^{\infty}e^{-z} dz= e^{-b/t}.
  \label{gaestbt}
\end{eqnarray}
We get for $|x-y|, |x-y-h|\ge kL-|x|>T$, $h<L$
\begin{equation*}
\begin{split}
|I_{12}| \stackrel{\rm A\ref{assxs}}{\le}  C_{\sigma}\int_0^t |p(t-s,x-h-y)-p(t-s,x-y)|ds\\
\le\int_0^t \Bigl|\int_{x-h}^{x}\dfrac{\partial p(t-s,v-y)}{\partial v} \,dv  \Bigr|ds
 \stackrel{\eqref{eq:estpx}}{\leq}   \int_0^t\Bigl(\frac{C_\lambda }{t-s}
\int_{x-h}^{x}e^{-\frac{\lambda |v-y|^2}{t-s}}dv\Bigr)ds\\
 \stackrel{t-s=r}{=} C_\lambda \int_{x-h}^{x}dv \int_0^t\frac{1}{r} e^{-\frac{\lambda |v-y|^2}{r}}dr
 \stackrel{\eqref{gaestbt}}{\leq}  C_\lambda \int_{x-h}^{x} e^{-\frac{\lambda |v-y|^2}{t}}\,dv
 \le C_\lambda |h|  e^{-\frac{\lambda (kL-|x|)^2}{t}}.
\end{split}
\end{equation*}

Also,
\begin{equation}\label{est41}
\begin{split}
|I_{11}| \le  \int_0^t p(t-s,x-y)|\sigma (s,y+h)-\sigma (s,y)|\,ds\\
\stackrel{\rm A\ref{assxs}}{\le}  Ch^{\beta(\sigma)} \int_0^t\dfrac{1}{\sqrt{t-s}}e^{-\frac{|x-y|^2}{t-s}}ds\le Ch^{\beta(\sigma)} \int_0^t\dfrac{1}{\sqrt{t-s}}e^{-\frac{(kL-|x|)^2}{t-s}}ds\\
\le Ch^{\beta(\sigma)} e^{-\frac{(kL-|x|)^2}{t}}\int_0^t \dfrac{1}{\sqrt{t-s}} ds
\le Ch^{\beta(\sigma)} e^{-\frac{(kL-|x|)^2}{t}} .
\end{split}
\end{equation}

Thus, for any $0<\lambda<\frac{1}{4}$
\begin{equation}\label{est11}
 |q(z,y+h)-q(z,y)|\le  C_\lambda h^{\beta(\sigma)} e^{-\frac{\lambda(kL-|x|)^2}{t}}.
\end{equation}
Similarly, consider $p(t-s,x+y+(4n+2)L)$, $n\in\zz^*$, $p(t-s,x+y+2L)$, and for
\begin{eqnarray*}
\widetilde{q}(z,y)= \int_0^t \Bigl(\sum_{n\in\zz^*} p(t-s,x-y+4nL)-\sum_{n\in\zz^*}p(t-s,x+y+(4n+2)L)\\
-p(t-s,x+y+2L)\Bigr)\sigma (s,y)\,ds
\end{eqnarray*}
we get
\begin{equation}\label{eq:wqzh}
 |\widetilde{q}(z,y+h)-\widetilde{q}(z,y)|\le  CC_\lambda h^{\beta(\sigma)} e^{-\frac{\lambda(L-|x|)^2}{t}},\quad
z=(t,x),\ y,\ y+h\in D_L.
\end{equation}
As in~\eqref{est41}, we obtain
\begin{eqnarray*}
|{q}(z,y)|\le C C_\lambda e^{-\frac{\lambda(kL-|x|)^2}{t}}.
\end{eqnarray*}
Therefore,
\begin{eqnarray}\label{eq:wqz}
 |\widetilde{q}(z,y)|\le C C_\lambda e^{-\frac{\lambda(L-|x|)^2}{t}},\quad
z=(t,x),\ y\in D_L.
\end{eqnarray}

We take $\tau>1/2$ from A\ref{assxm} and some $\beta>0$ such that $\beta+1-2 \beta(\sigma)<0$.
From~\eqref{eqesqm}, for the version~\eqref{eqvers} of the stochastic integral (taking intervals $[j\delta,(j+1)\delta]$ instead of $[j,j+1]$ and $d_{kn}^{(j)}=(j+k 2^{-n})\delta$), we get
\begin{equation}\label{eq:estint}
\begin{split}
\Bigl|\int_{D_L}  \int_0^t (G^L(t-s,x,y) - p(t-s,x-y) )\sigma(s,y)\,ds\,d\mu(y)\Bigr|\\
=\Bigl|\int_{D_L}\widetilde{q}(z, y)\,d\eta (y)\Bigr|
\le \sum_{j\in\zz}\Bigl|\int_{(j\delta, (j+1)\delta]\cap D_L} \widetilde{q}(z, y)\,d\mu (y)\Bigr|\\
  \stackrel{\eqref{eqesqm},L=i\delta}{\le} \sum_{j: (j\delta, (j+1)\delta]\subset D_L}
    |\widetilde{q}(z,j\delta)\mu((j\delta, (j+1)\delta])|\\
  + \Bigl\{\sum_{n\ge 1}2^{n\beta}
\sum_{1\le k\le 2^{n}}|\widetilde{q}(z,d_{kn}^{(j)})-\widetilde{q}(z,d_{(k-1)n}^{(j)})|^2\Bigr\}^{1/2}\Bigl\{\sum_{n\ge 1}2^{-n\beta}\sum_{1\le k\le 2^{n}}|\mu(\Delta_{kn}^{(j)})|^2\Bigr\}^{1/2}  \\
\stackrel{\eqref{eq:wqz},\eqref{eq:wqzh}}{\le} C C_\lambda e^{-\frac{\lambda(L-|x|)^2}{t}} \Bigl[\sum_{j: (j\delta, (j+1)\delta]\subset D_L} |\mu (j\delta, (j+1)\delta])|\\
 + \sum_{j: (j\delta, (j+1)\delta]\subset D_L} \Bigl\{\sum_{n\ge 1}2^{n(\beta+1-2 \beta(\sigma))}\sum_{1\le k\le 2^{n}}
|\mu (\Delta_{kn}^{(j)})|^2 \Bigr\}^{1/2}\Bigr]\\
{\le}  C C_\lambda e^{-\frac{\lambda(L-|x|)^2}{t}}
\Bigl[ \Bigl(\sum_{j\in\zz}(|j|+1)^{2\tau}(\mu ((j\delta, (j+1)\delta]))^2
\Bigr)^{1/2}\Bigl(\sum_{j\in\zz}(|j|+1)^{-2\tau}\Bigr)^{1/2}\\
 + \Bigl(\sum_{j\in\zz}(|j|+1)^{2\tau}  \sum_{n\ge 1} 2^{n(\beta+1-2 \beta(\sigma))} \sum_{1\le k\le 2^n}
|\mu ({\Delta_{kn}^{(j)}})|^2\Bigr)^{1/2}
\Bigl(\sum_{j\in\zz}(|j|+1)^{-2\tau} \Bigr)^{1/2}\Bigr].
\end{split}
\end{equation}
Here, the sums with $\mu $ have a form
 $\sum_{l=1}^{\infty}\Bigl(\int_{\rr} f_l\,d\mu  \Bigr)^2$, where
\begin{eqnarray*}
\{f_l (y),\ l\ge 1\}  =  \{(|j|+1)^{\tau}\, \ii_{(j\delta, (j+1)\delta]}(y),\ j\in \zz\},\\
\{f_l (y),\ l\ge 1\}  =  \{(|j|+1)^{\tau} 2^{n(\beta+1-2 \beta(\sigma))/2}
\ii_{\Delta_{kn}^{(j)}}(y),\ j\in \zz,\ n\ge 1,\ 1\le k\le 2^{n}\}.
\end{eqnarray*}
We have $\sum_{l=1}^{\infty}|f_l(y)|\le C(|y|+1)^{\tau}$ in both cases. From Lemma~\ref{lmfkmu} it follows that
 \[
 \sum_{l=1}^{\infty}\Bigl(\int_{\xx} f_l\,d\mu  \Bigr)^2<+\infty\ \textrm{a.\ s.}
 \]
We take a set $\Omega_1\subset\Omega$ such that for $\omega\in\Omega_1$, this series is finite for both sequences $\{f_l\}$. Thus $\pp(\Omega_1)=1$ and
\begin{equation}\label{eq:estio}
|I_1|\le  C_{\delta}(\omega) C_\lambda  e^{-\frac{\lambda (L-|x|)^2}{t}}
\end{equation}
holds for all $L=i\delta$, $i\in \zz_+$, and $\omega\in\Omega_1$. The set $\Omega_1$ depends only on $\mu$, $\delta$ and $\beta$.

In $I_2$ we have $|x-y|\ge L-|x|$. Again we consider $q(z,y)$ from~\eqref{eq:defq}, now with $|y|>L$. Repeating the consideration from~\eqref{eq:defq} to~\eqref{est11}, we get here
\begin{equation*}
\begin{split}
|q(z,y+h)-q(z,y)|\le  C_\lambda h^{\beta(\sigma)} e^{-\frac{\lambda(L-|x|)^2}{t}},\\
 |{q}(z,y)|\le C C_\lambda e^{-\frac{\lambda(kL-|x|)^2}{t}},\quad |y|>L.
\end{split}
\end{equation*}
Similarly to \eqref{eq:estint}, we get
\begin{equation}\label{eq:estintc}
\begin{split}
\Bigl|\int_{D_L^c} \int_0^t p(t-s,x-y) \sigma(s,y)\,ds\,d\mu(y)\Bigr|\\
=\Bigl|\int_{D_L^c}{q}(z, y)\,d\eta (y)\Bigr|
\le \sum_{j\in\zz}\Bigl|\int_{(j\delta, (j+1)\delta]\cap D_L^c} {q}(z, y)\,d\mu (y)\Bigr|\\
  \stackrel{\eqref{eqesqm},L=i\delta}{\le} \sum_{j: (j\delta, (j+1)\delta]\subset D_L^c}
    |{q}(z,j\delta)\mu((j\delta, (j+1)\delta])|\\
  + \Bigl\{\sum_{n\ge 1}2^{n\beta}
\sum_{1\le k\le 2^{n}}|{q}(z,d_{kn}^{(j)})-{q}(z,d_{(k-1)n}^{(j)})|^2\Bigr\}^{1/2}\Bigl\{\sum_{n\ge 1}2^{-n\beta}\sum_{1\le k\le 2^{n}}|\mu(\Delta_{kn}^{(j)})|^2\Bigr\}^{1/2}  \\
\stackrel{\eqref{eq:wqz},\eqref{eq:wqzh},L=i\delta}{\le} C C_\lambda e^{-\frac{\lambda(L-|x|)^2}{t}} \Bigl[\sum_{j: (j\delta, (j+1)\delta]\subset D_L^c} |\mu (j\delta, (j+1)\delta])|\\
 + \sum_{j: (j\delta, (j+1)\delta]\subset D_L^c} \Bigl\{\sum_{n\ge 1}2^{n(\beta+1-2 \beta(\sigma))}\sum_{1\le k\le 2^{n}}
|\mu (\Delta_{kn}^{(j)})|^2 \Bigr\}^{1/2}\Bigr]\\
{\le}  C C_\lambda e^{-\frac{\lambda(L-|x|)^2}{t}}
\Bigl[ \Bigl(\sum_{j\in\zz}(|j|+1)^{2\tau}(\mu ((j\delta, (j+1)\delta]))^2
\Bigr)^{1/2}\Bigl(\sum_{j\in\zz}(|j|+1)^{-2\tau}\Bigr)^{1/2}\\
 + \Bigl(\sum_{j\in\zz}(|j|+1)^{2\tau}  \sum_{n\ge 1} 2^{n(\beta+1-2 \beta(\sigma))} \sum_{1\le k\le 2^n}
|\mu ({\Delta_{kn}^{(j)}})|^2\Bigr)^{1/2}
\Bigl(\sum_{j\in\zz}(|j|+1)^{-2\tau} \Bigr)^{1/2}\Bigr].
\end{split}
\end{equation}
As in \eqref{eq:estio}, we get that
\begin{equation}\label{eq:estit}
|I_2|\le  C_{\delta}(\omega) C_\lambda  e^{-\frac{\lambda (L-|x|)^2}{t}},
\end{equation}
\eqref{eq:estio} and \eqref{eq:estit} imply that \eqref{eq:zetaes} holds.
\end{proof}

\begin{remark}\label{rem:nodelta}
Let SM $\mu$ be such that $\mu((a,b])\le C(\omega)$, $a,\ b\in\rr$. Then we can obtain \eqref{eq:zetaes} for arbitrary $L\in\rr_+$, and get
\[
|\vartheta^L(t,x)-\vartheta(t,x)|\le  C(\omega) C_\lambda  e^{-\frac{\lambda (L-|x|)^2}{t}} ,
\]
where $C(\omega)$ is independent of $L$. To prove this, we can fix $\delta=1$, and note that in this case in~\eqref{eq:estint} we get additional summands
\begin{eqnarray*}
|\mu (j_0, L]\cap D_L)|+|\mu [-L, -j_0]])|\\
+\sum_{n\ge 1}2^{-n\beta}|\mu((d_{k_n n}^{(j_0)},L])|+\sum_{n\ge 1}2^{-n\beta}|\mu([-L,d_{k_n n}^{(-j_0)}])|
\end{eqnarray*}
for some $j_0$, $k_n$ that depend on $L$ (and similar summands in \eqref{eq:estintc}). If $\mu((a,b])\le C(\omega)$, then these sums are uniformly bounded.
\end{remark}

\section{Main result}\label{sc:mainr}

\begin{thm}
Let Assumptions A\ref{assxu} -- A\ref{assxm} hold, $u^L$ is the solution to~\eqref{eq:bemfgl}, $u$ is the solution to~\eqref{eq:bemfg}. Then, for each $\delta>0$ there exist versions of $u^L, u$ such that for all $\omega\in\Omega$ for each $x\in D_L$, $L=i\delta$, $i\in\zz_+$, $0<t\le T$, for any $0<\lambda<\frac{1}{4}$ holds
\begin{equation}\label{eq:udiffes}
|u^L(t,x)-u(t,x)|\le  C_{\delta}(\omega) C_\lambda  e^{-\frac{\lambda (L-|x|)^2}{t}} .
\end{equation}
\end{thm}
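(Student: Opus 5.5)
We subtract the mild forms \eqref{eq:bemfgl} and \eqref{eq:bemfg} and split the difference into three parts:
\[
u^L(t,x)-u(t,x)=J_0(t,x)+J_f(t,x)+\bigl(\vartheta^L(t,x)-\vartheta(t,x)\bigr).
\]
Here $J_0$ is the difference of the initial-value terms and $J_f$ the difference of the drift terms. We take the versions of $\vartheta^L,\vartheta$ from Lemma~\ref{lm:zetaes} and define the versions of $u^L,u$ through the right-hand sides of the mild equations with these versions. The stochastic term is then controlled directly by \eqref{eq:zetaes}. As in the proof of Lemma~\ref{lm:zetaes}, finitely many $i$ may be discarded, because both solutions are bounded: for small $L$ the left side of \eqref{eq:udiffes} is bounded by a random constant, and the right side is bounded below by a positive multiple of this constant, since $L-|x|\le L$ is then bounded.

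For $J_0$ we write $J_0=\int_{D_L}(G^L-p)(t,x,y)u_0(y)\,dy-\int_{D_L^c}p(t,x-y)u_0(y)\,dy$. Using A\ref{assxu}, the image-sum expansion \eqref{eq:estgp}, and $|x+y+2L|\ge L-|x|$, we obtain a bound $C_{u_0}(\omega)\int$ of Gaussians centered at distance at least $L-|x|$ from $x$. A Gaussian tail estimate gives $C(\omega)C_\lambda e^{-\lambda(L-|x|)^2/t}$, where we trade $1/4$ for $\lambda<1/4$ to absorb the polynomial factors. The non-image terms, $n\in\zz^*$, decay even faster, and summing over $n$ is routine.

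For the drift term we split $J_f=J_f^{(1)}+J_f^{(2)}$. The first part $J_f^{(1)}$ has the same structure as $J_0$, with $u_0$ replaced by the bounded function $f(s,y,u(s,y))$ (A\ref{assxfgb}), and after integrating in $s$ it is bounded by $CC_\lambda e^{-\lambda(L-|x|)^2/t}$ in the same way. The second part is
\[
J_f^{(2)}=\int_0^t\int_{D_L}G^L(t-s,x,y)\bigl(f(s,y,u^L(s,y))-f(s,y,u(s,y))\bigr)\,dy\,ds.
\]
By A\ref{assxfgyv} it is bounded by $L_f\int_0^t\int_{D_L}G^L(t-s,x,y)|u^L-u|(s,y)\,dy\,ds$, and we use $0\le G^L\le p$. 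Setting $w(s,y)=|u^L(s,y)-u(s,y)|$, we arrive at the inequality
\[
w(t,x)\le C_\delta(\omega)C_\lambda e^{-\lambda(L-|x|)^2/t}+L_f\int_0^t\int_{D_L}p(t-s,x-y)\,w(s,y)\,dy\,ds .
\]

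The main obstacle is closing this Gronwall-type inequality while keeping the Gaussian weight in $L-|x|$. We introduce the weighted quantity
\[
M(s)=\sup_{y\in D_L}w(s,y)\,e^{\lambda(L-|y|)^2/s},
\]
which is finite because $w$ is bounded. The key convolution estimate needed is
\[
\int_{D_L}p(t-s,x-y)\,e^{-\lambda(L-|y|)^2/s}\,dy\le C\,e^{-\lambda'(L-|x|)^2/t}
\]
for $\lambda'<\lambda$ and $s\le t$. This is a Gaussian semigroup-type bound: $(L-|y|)^2/s\ge(L-|y|)^2/t$, and we combine this with $\lambda'(a+b)^2\le\lambda a^2+Cb^2$, the factor $e^{Cb^2/(t-s)}$ being absorbed by the kernel when $C<1/4$. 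This step forces the loss $\lambda\to\lambda'$. Since $\lambda<1/4$ is arbitrary, we start from $\lambda$ slightly larger than the target, or fix $\lambda$ and accept a smaller constant. The resulting linear Volterra inequality $M'(t)\le A+C\int_0^t M'(s)\,ds$ for the weighted sup with $\lambda'$ gives $M'\le Ae^{CT}$ by Gronwall. Tracking that $A=C_\delta(\omega)C_\lambda$ is uniform in $L=i\delta$ and in $x$ then yields \eqref{eq:udiffes}.

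If the uniform weighted-sup approach runs into measurability or finiteness issues, the fallback is to iterate the inequality, which produces a Picard series $\sum_m (L_fCt)^m/m!$, and to sum it.
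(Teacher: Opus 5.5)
Your decomposition and your bounds for the initial-data term, for $J_f^{(1)}$ and for the stochastic term are fine. They parallel the paper's $J_1$, $J_{21}+J_{23}$ and $J_3$, and using $0\le G^L\le p$ on the Lipschitz piece is a legitimate alternative to the paper's split. The gap is in closing $w\le \xi+L_fPw$, where $(Pw)(t,x)=\int_0^t\int_{D_L}p(t-s,x-y)w(s,y)\,dy\,ds$ and $\xi(t,x)=Ae^{-\lambda(L-|x|)^2/t}$.

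A convolution bound that lowers the exponent cannot close a Volterra inequality for a fixed weight. If you insert $w(s,y)\le M'(s)e^{-\lambda'(L-|y|)^2/s}$, you get back only $e^{-\lambda''(L-|x|)^2/t}$ with $\lambda''<\lambda'$. After multiplying by $e^{\lambda'(L-|x|)^2/t}$, the integral term carries the factor $e^{(\lambda'-\lambda'')(L-|x|)^2/t}$, which is not bounded uniformly in $x,t,L$. So $M'(t)\le A+C\int_0^tM'(s)\,ds$ with a constant $C$ does not follow.

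Nor is the loss ``slight''. Absorbing $e^{Cb^2/(t-s)}$ into $e^{-b^2/(4(t-s))}$ needs $C=\lambda\lambda'/(\lambda-\lambda')<1/4$, i.e.\ $1/\lambda'>1/\lambda+4$. One step therefore already forces $\lambda'<1/8$. In your Picard fallback, $m$ steps push the exponent below $1/(4m)$, so that series cannot be summed with a fixed Gaussian weight either.

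There is a second, separate problem. $M(s)$ is finite for each $s>0$, but the only a priori bound, $M(s)\le \sup|w|\,e^{\lambda L^2/s}$, is not integrable at $s=0$. So applying Gronwall to $M$ is not justified as stated.

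The missing idea is that nothing is lost if you keep $1/s$ rather than replacing it by $1/t$. By Cauchy--Schwarz, $a^2/s+b^2/(t-s)\ge (a+b)^2/t$. Take $a=L-|y|$ and $b=|x-y|$, so that $a+b\ge L-|x|\ge 0$, and split $\frac14=\lambda+(\frac14-\lambda)$ in the kernel. This gives
\[
\int_{D_L}p(t-s,x-y)\,e^{-\lambda(L-|y|)^2/s}\,dy\le e^{-\lambda(L-|x|)^2/t}\int_{\rr}\frac{e^{-(1/4-\lambda)(x-y)^2/(t-s)}}{2\sqrt{\pi(t-s)}}\,dy=\frac{e^{-\lambda(L-|x|)^2/t}}{\sqrt{1-4\lambda}} .
\]
The paper gets the same lossless bound differently. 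It dominates $p$ by a Gaussian with parameter $\lambda$, splits $e^{-\lambda(L-|y|)^2/s}\le e^{-\lambda(y-L)^2/s}+e^{-\lambda(y+L)^2/s}$, and uses the exact Gaussian convolution identity.

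Then close by iteration rather than by a weighted supremum; this is what the space-time Gronwall lemma with $\mathcal K=\sum_k p^{*k}$ used in the paper encodes. Write $w\le\sum_{m<N}(L_fP)^m\xi+(L_fP)^Nw$. The remainder is at most $\sup|w|\,(L_fT)^N/N!\to0$ because both solutions are bounded. The bound above gives $(L_fP)^m\xi(t,x)\le A\,(L_ft)^m(1-4\lambda)^{-m/2}e^{-\lambda(L-|x|)^2/t}/m!$. Summing over $m$ yields the claim, with $C_\lambda$ absorbing $e^{L_fT/\sqrt{1-4\lambda}}$.

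Finally, you need not discard finitely many $i$ here, since Lemma~\ref{lm:zetaes} covers all $i$. Your justification for doing so would fail anyway: $e^{-\lambda(L-|x|)^2/t}\to 0$ as $t\downarrow 0$ for $|x|<L$, so the right-hand side is not bounded below.
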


\begin{proof}
Consider
\begin{equation*}
\begin{split}
u^L(t,x)-u(t,x)=\Bigl(\int_{D_L} G^L(t,x,y)u_0(y)\,dy-\int_{\rr} p(t,x-y)u_0(y)\,dy\Bigr)\\
+\Bigl(\int_0^t \int_{D_L}G^L(t-s,x,y) f(s,y,u^L(s,y))\,dy\,ds\\
-\int_0^t \int_{\rr}p(t-s,x-y) f(s,y,u(s,y))\,dy\,ds\Bigr)\\
+\Bigl(\int_{D_L} \int_0^t G^L(t-s,x,y) \sigma(s,y)\,ds\,d\mu(y)\\
-\int_{\rr} \int_0^t p(t-s,x-y) \sigma(s,y)\,ds\,d\mu(y)\Bigr)\\
:=J_1+J_2+J_3.
\end{split}
\end{equation*}
by Lemma~\ref{lm:zetaes}, for some versions of the integrals, holds
\begin{equation*}
|J_3|\le  C_{\delta}(\omega) C_\lambda  e^{-\frac{\lambda (L-|x|)^2}{t}} .
\end{equation*}

For $J_1$, we have
\begin{eqnarray*}
|J_1|\le \int_{D_L} |G^L(t,x,y)-p(t,x-y)| |u_0(y)|\,dy+\int_{D_L^c} p(t,x-y)|u_0(y)|\,dy\\
\stackrel{\rm A\ref{assxu}}{\le}C_{u_0}(\omega) \int_{D_L} |G^L(t,x,y)-p(t,x-y)| \,dy+C_{u_0}(\omega)\int_{D_L^c} p(t,x-y)\,dy\\
:=J_{11}+J_{12}.
\end{eqnarray*}

We recall the Chernoff inequality (see, for instance, Example~2.1~\cite{wain19}). For random variable $X\sim N(\mu,\sigma^2)$ and $r\ge 0$ holds
\begin{equation}\label{eq:chern}
\pp\{X\ge \mu+r\}\le \exp\Bigl\{-\dfrac{r^2}{2\sigma^2}\Bigr\}.
\end{equation}
We have that $p(t,y-x)$, as a function of $y$, is the density function of the distribution $N(x,2t)$.

Thus, for $x\in D_L$, we obtain
\begin{eqnarray*}
\int_{D_L} p(t,x-y+4nL)\,dy\stackrel{v=y-4nL}{=} \int_{[-(4n+1)L,-(4n-1)L]} p(t,x-v)\,dv\\
\stackrel{\eqref{eq:chern}}{\le} \exp\Bigl\{-\dfrac{((4|n|-1)L-|x|)^2}{4t}\Bigr\}.
\end{eqnarray*}
In a similar way, we consider $p(t,x+y+(4n+2)L)$, $p(t,x+y+2L)$ in~\eqref{eq:estgp}, and obtain that
\begin{equation}\label{eq:estj11}
J_{11}\le C(\omega)\exp\Bigl\{-\dfrac{(L-|x|)^2}{4t}\Bigr\}.
\end{equation}

Also, \eqref{eq:chern} implies that
\begin{equation}\label{eq:estj12}
J_{12} \le 2 C_{u_0}(\omega) \exp\Bigl\{-\dfrac{(L-|x|)^2}{4t}\Bigr\}.
\end{equation}

For $J_2$, we get
\begin{eqnarray*}
|J_2|\le \int_0^t \int_{D_L}|G^L(t-s,x,y)-p(t-s,x-y)| |f(s,y,u^L(s,y))|\,dy\,ds\\
+\int_0^t \int_{D_L}p(t-s,x-y) |f(s,y,u^L(s,y))-f(s,y,u(s,y))|\,dy\,ds\\
+\int_0^t \int_{D_L^c} p(t-s,x-y) |f(s,y,u^L(s,y))|\,dy\,ds\\
:=J_{21}+J_{22}+J_{23}.
\end{eqnarray*}

Similarly to~\eqref{eq:estj11}, we obtain
\begin{eqnarray*}
J_{21}\stackrel{\rm A\ref{assxfgb}}{\le} \int_0^t C_f\int_{D_L} |G^L(t-s,x,y)-p(t-s,x-y)| \,dy\,ds\\
{\le} C(\omega)\int_0^t \exp\Bigl\{-\dfrac{(L-|x|)^2}{4(t-s)}\Bigr\}\,ds\le C(\omega)\exp\Bigl\{-\dfrac{(L-|x|)^2}{4t}\Bigr\}.
\end{eqnarray*}

As in~\eqref{eq:estj12}, we get
\begin{eqnarray*}
J_{23}\stackrel{\rm A\ref{assxfgb}}{\le} \int_0^t C_f\int_{D_L^c}p(t-s,x-y) \,dy\,ds\le C(\omega)\exp\Bigl\{-\dfrac{(L-|x|)^2}{4t}\Bigr\}.
\end{eqnarray*}
Also
\begin{eqnarray*}
J_{22}\stackrel{\rm A\ref{assxfgyv}}{\le} L_{f}\int_0^t \int_{D_L}p(t-s,x-y) |u^L(s,y)-u(s,y)|\,dy\,ds.
\end{eqnarray*}

Gathering together all estimates of $J_1$, $J_2$, and $J_3$, we obtain that for any $\lambda<\frac{1}{4}$
\begin{equation*}
\begin{split}
|u^L(t,x)-u(t,x)|\ii_{\{x\in D_L\} }\le C_{\delta}(\omega)C_{\lambda}\exp\Bigl\{-\dfrac{\lambda(L-|x|)^2}{t}\Bigr\}\\
+C(\omega)\int_0^t \int_{\rr}p(t-s,x-y) |u^L(s,y)-u(s,y)|\ii_{\{ y\in D_L\} }\,dy\,ds.
\end{split}
\end{equation*}

For $|u^L(t,x))-u(t,x)|\ii_{|x|\le L}$, we will use the space-time Gronwall-type inequality (see Lemma~C.1.7~(d)~\cite{dalbook}), and obtain
\begin{equation}\label{eq:ugron}
|u^L(t,x)-u(t,x)|\le \xi_{\delta,\lambda,L}(t,x)+C(\omega)\int_0^t \int_{\rr} \mathcal{K}(t-s,x-y) \xi_{\delta,\lambda,L}(s,y)\,dy\,ds
\end{equation}
where $\mathcal{K}$ denotes the series with iterated space-time convolutions
\[
\mathcal{K}(t,x)=\sum_{k=1}^\infty p^{*k}(t,x)
\]
(see more details in Section~C.1.3~\cite{dalbook}) and
\[
\xi_{\delta,\lambda,L}(t,x)=C_{\delta}(\omega)C_{\lambda}\exp\Bigl\{-\dfrac{\lambda(L-|x|)^2}{t}\Bigr\}\ii_{\{ x\in D_L\} }.
\]
It is well-known that
\[
\int_{\rr} p(t_1,x-y) p(t_2,y)\,dy=p(t_1+t_2,x)\,ds,
\]
therefore,
\[
p^{*2}(t,x)=\int_0^t \int_{\rr} p(t-s,x-y) p(s,y)\,dy\,ds=\int_0^t p(t,x)\,ds= t p(t,x),
\]
and applying induction, we easily obtain that
\begin{eqnarray*}
p^{*k}(t,x)=\frac{t^{k-1}}{(k-1)!} p(t,x)\quad\Rightarrow\quad \mathcal{K}(t,x)=e^t p(t,x).
\end{eqnarray*}

Further, we get
\begin{eqnarray*}
\int_0^t \int_{\rr} \mathcal{K}(t-s,x-y) \xi_{\delta,\lambda,L}(s,y)\,dy\,ds\\
=C(\omega)\int_0^t \int_{\rr} e^{t-s} p(t-s,x-y) \exp\Bigl\{-\dfrac{\lambda(L-|y|)^2}{s}\Bigr\}\ii_{\{ y\in D_L\} }\,dy\,ds\\
=C(\omega)\int_0^t \int_{\rr} e^{t-s} \dfrac{1}{2\sqrt{\pi(t-s)}}\exp\Bigl\{-\dfrac{(x-y)^2}{4(t-s)}\Bigr\} \exp\Bigl\{-\dfrac{\lambda(L-|y|)^2}{s}\Bigr\}\ii_{\{ y\in D_L\} }\,dy\,ds\\
\le C(\omega)\dfrac{2}{\sqrt{\lambda}}\int_0^t \dfrac{\sqrt{\pi s}}{\sqrt{\lambda}}\int_{\rr} \dfrac{\sqrt{\lambda}}{\sqrt{\pi(t-s)}}\exp\Bigl\{-\dfrac{\lambda(x-y)^2}{t-s}\Bigr\} \dfrac{\sqrt{\lambda}}{\sqrt{\pi s}}\Bigl(\exp\Bigl\{-\dfrac{\lambda(y-L)^2}{s}\Bigr\}\\
+\exp\Bigl\{-\dfrac{\lambda(y+L)^2}{s}\Bigr\}\Bigr)\,dy\,ds.
\end{eqnarray*}
From the properties of the convolution of density functions of normal distributions, we get
\begin{eqnarray*}
\int_{\rr} \dfrac{\sqrt{\lambda}}{\sqrt{\pi(t-s)}}\exp\Bigl\{-\dfrac{\lambda(x-y)^2}{t-s}\Bigr\} \dfrac{\sqrt{\lambda}}{\sqrt{\pi s}}\exp\Bigl\{-\dfrac{\lambda(y-L)^2}{s}\Bigr\}\,dy\\
=\dfrac{\sqrt{\lambda}}{\sqrt{\pi t}}\exp\Bigl\{-\dfrac{\lambda(x-L)^2}{t}\Bigr\},\\
\int_{\rr} \dfrac{\sqrt{\lambda}}{\sqrt{\pi(t-s)}}\exp\Bigl\{-\dfrac{\lambda(x-y)^2}{t-s}\Bigr\} \dfrac{\sqrt{\lambda}}{\sqrt{\pi s}}\exp\Bigl\{-\dfrac{\lambda(y+L)^2}{s}\Bigr\}\,dy\\
=\dfrac{\sqrt{\lambda}}{\sqrt{\pi t}}\exp\Bigl\{-\dfrac{\lambda(x+L)^2}{t}\Bigr\},
\end{eqnarray*}
therefore

\begin{eqnarray*}
\int_0^t \int_{\rr} \mathcal{K}(t-s,x-y) \xi_{\delta,\lambda,L}(s,y)\,dy\,ds
\le C_{\lambda,\delta} (\omega) \exp\Bigl\{-\dfrac{\lambda(L-|x|)^2}{t}\Bigr\},
\end{eqnarray*}
and \eqref{eq:ugron} implies the result of our Theorem.
\end{proof}

\begin{remark}
Let SM $\mu$ be such that $\mu((a,b])\le C(\omega)$, $a,\ b\in\rr$. Then we can obtain \eqref{eq:udiffes} for arbitrary $L\in\rr_+$, i.e.
\[
|u^L(t,x)-u(t,x)|\le  C(\omega) C_\lambda  e^{-\frac{\lambda (L-|x|)^2}{t}} ,
\]
where $C(\omega)$ is independent of $L$. This follows from Remark~\ref{rem:nodelta}, because we used form $L=i\delta$ only in Lemma~\ref{lm:zetaes}.
\end{remark}

\begin{remark} For the heat equation with a multiplicative noise driven by the Wiener process, under some assumptions, Theorem~3.1~\cite{candil26} states that for all $(t,x)\in [0, T]\times D_L$, $p\ge 1$ holds
\begin{eqnarray*}
\|u^L(t,x)-u(t,x)\|_{{\sf L_p}(\Omega)}\\
\le C(1+\sup_{x\in\rr}|u_0(x)|)\Bigl(\exp\Bigl\{-\frac{(L-x)^2}{8t}\Bigr\}+\exp\Bigl\{-\frac{(L+x)^2}{8t}\Bigr\}\Bigr),
\end{eqnarray*}
where $C$ is independent of $L,x,t,u_0$. In our paper, we have a similar convergence rate for $u^L\to u$.

Also, note that Theorem~3.1~\cite{candil26} includes the cases of the mixed boundary conditions and the Neumann boundary conditions for the equation.
\end{remark}

\bibliographystyle{plain}
\bibliography{RadchenkoAsympHeatLInft_24July2026}

\end{document}